\newenvironment{rcases}
{\left.\begin{aligned}}
	{\end{aligned}\right\rbrace}
\newtheorem{Theorem}{Theorem}
\newtheorem{Remark}{Remark}
\newtheorem{Lemma}{Lemma}
\newtheorem{Proposition}{Proposition}
\newtheorem{Definition}{Definition}
\numberwithin{Definition}{section}
\numberwithin{Theorem}{section}
\numberwithin{Lemma}{section}
\numberwithin{Proposition}{section}
\newtheorem{Example}{Example}
\numberwithin{Corollary}{section}
\numberwithin{Example}{section}
\numberwithin{Remark}{section}
\begin{document}
\title{ On an Exact Penalty Result and New Constraint Qualifications for Mathematical Programs with Vanishing Constraints}	
	\author{Triloki Nath\thanks{Department of Mathematics and Statistics, Dr. Harisingh Gour Vishwavidyalaya, Sagar, Madhya Pradesh-470003, INDIA, Email- tnverma07@gmail.com}  and  Abeka Khare\thanks{Department of Mathematics and Statistics, Dr. Harisingh Gour Vishwavidyalaya, Sagar, Madhya Pradesh-470003, INDIA, Email- abekakhare2012@gmail.com} }	
\maketitle
\date{}
\begin{abstract}
 In this paper,  the mathematical programs with vanishing constraints or MPVC are considered. We prove that an MPVC-tailored penalty function, introduced in \cite{hoheisel}, is still  exact under a very weak and new constraint qualification. Most importantly, this constraint qualification is shown to be strictly stronger than MPVC-Abadie constraint qualification.
\end{abstract}
%\keywords{Mathematical Programs with Vanishing Constraints \and Enhanced Fritz John conditions  \and Constraint qualifications \and Error bound}
%\subclass{ 90C30 \and 49M05 \and 65K10}
		\section{Introduction}
		In this paper, we consider \emph{mathematical program with vanishing constraints} (or MPVC in short),                                                                    having the following mathematical form:
		\begin{eqnarray}
\label{initprblm}
		\nonumber {\rm min}_{x \in \mathbb{R}^n} ~~ f(x)\\
	 \nonumber {\rm s.t.}~~ g_i(x) &\leqslant&  0 ~~~\forall~  i=1,2,...,m,~~\\
		\nonumber h_j(x) &=&  0 ~~~\forall~  j=1,2,...,l\\
		\nonumber 	H_i(x) &\geqslant&  0 ~~~\forall~  i=1,2,...,q\\
		G_i(x) H_i(x) &\leqslant&  0 ~~~\forall~  i=1,2,...,q.
		\end{eqnarray}
		where all functions $f : \mathbb{R}^n \rightarrow \mathbb{R}, ~g_i : \mathbb{R}^n \rightarrow \mathbb{R},~h_i : \mathbb{R}^n \rightarrow \mathbb{R},~G_i : \mathbb{R}^n \rightarrow \mathbb{R},~H_i~:~\mathbb{R}^n~\rightarrow~\mathbb{R}$ are assumed to be continuously differentiable. The nomenclature is justified because its implicit sign constraint function $G_i(x) \leqslant 0$ vanishes whenever $H_i(x) = 0$. We assume $\mathcal{C}$ as the feasible region for this MPVC throughout the paper.\\
\hspace*{0.5 cm} The MPVC plays very important roles in many fields, such as truss topology optimization \cite{achtziger} and robot motion planning\cite{Latombe,Kirches2}. The constrained optimization problems arising in applied sciences, engineering and economics seek the algorithms, which rely on standard Karush-Kuhn-Tucker (KKT) conditions. The major difficulty in solving MPVC is that it typically violates most of the standard constraint qualifications (CQs), and hence the standard  KKT conditions are not relevant in MPVC context. \\
		\hspace*{0.5 cm} It is known that MPVC is closely related to the well known MPEC (\emph{mathematical programs with equilibrium constraints}), and this leads to an analogous development for MPVC. In literature, a lot of research has been carried out for  MPVC regarding its stationarity conditions and constraint qualifications, see e.g.\cite{achtziger, hoheisel, hoheisel 2, hoheisel 4, hoheisel 3, q hu}, and for the algorithmic aspects we refer to \cite{achtziger2,hoheisel 1,Kanzow 2}. The exact penlty results are also associated with some sort of constraint qualifications. But, in this direction a very few work has been appeared, namely \cite{hoheisel, q hu}. To the best of our knowledge, \cite[Theorem 4.5]{hoheisel} is the first exact penalty result under MPVC-MFCQ for the following MPVC-tailored penalty function
\begin{equation*}
		P_\alpha (x) ~=~ f(x) + \alpha [ \sum_{i=1}^{m} \max \{0,g_i(x)\} +  \sum_{j=1}^{l} |h_j(x)| + \sum_{i=1}^{q}  \max \{0,-H_i(x), \min \{G_i(x), H_i(x)\} \}]
		\end{equation*}
\hspace*{0.5 cm} In \cite[corollary 6.8]{hoheisel}, the authors also discussed exact penalization of classical $l_1-$ penalty function associated to MPVC ($g$ and $h$ absent), given as follows
	\begin{equation*}
	P^1_\alpha(x) ~=~ f(x) + \alpha \sum_{i=1}^{q} \max \{-H_i(x), 0 \} + \alpha \sum_{i=1}^{q} \max \{G_i(x) H_i(x), 0\}.
	\end{equation*}
	\hspace*{0.5 cm} The authors concluded that exactness condition for MPVC-tailored-penalty function, namely MPVC-MFCQ, does not guarantee the exactness of $l_1$-penalty function and found MPVC-LICQ to be a sufficient condition for exactness of $l_1$-penalty function, but, under a very strong assumption that biactive set $I_{00}$ is empty. One can see that under this restriction an MPVC becomes, locally, a standard nonlinear program and loses its challenging combinatorial structure to some degree, see \cite{izmailov}. Later, Hu improved this result with MPVC-\emph{generalized pseudonormality CQ} in \cite[Theorem 3.2]{q hu}, which works under an assumption that includes the non-emptyness of biactive set. In future some better  results  regarding the  exactness of this  $l_1$- penalty function can also be concluded by imposing some relaxed assumptions than  \cite{hoheisel,q hu}.  It is still an open question, if we do not impose any condition on bi-active set . \\\\
\hspace*{0.5 cm}  Following the above discussion, one may naturally ask for conditions, weaker than MPVC-MFCQ, under which exact penalty result holds, atleast for $P_\alpha (x)$ the specialized one.\\

\hspace*{0.5 cm} The goal of this paper is bipartite, first we answer affirmatively, in a better way, that MPVC-tailored-penalty function still remains exact at any local minimizer under the MPVC- \emph{generalized quasinormality}, which is much weaker than  MPVC-MFCQ. The significance of our result will be illustrated in section 3 with an example. Secondly, we derive  relationships among some important old and new CQs of MPVC, defined so far. It is known \cite[Theorem 3.4]{hoheisel 2} that MPVC-GCQ (G-\emph{Guignard}) is the weakest CQ under which M-\emph{stationarty} conditions holds for MPVC. The MPVC-ACQ (A-\emph{Abadie}) is easily tractable and strictly stronger than MPVC-GCQ. In what follows, sufficient conditions have been investigated for MPVC-ACQ, see \cite{hoheisel 4,hoheisel 2}. We prove that MPVC-generalized quasinormality implies MPVC-ACQ in Theorem \ref{relation}. Although, implications among some stronger constraint qualification has been already established, see  \cite{hoheisel, q hu}. We provide examples to illustrate that relationships are strict among them.\\
\hspace*{0.5 cm} The rest of the paper is organized as follows. Section \ref{Pre} contains some background materials required to understand the present work. In section \ref{Expen} we derive sufficient condition for MPVC-tailored penalty function to be exact. The section \ref{relcq} is devoted to establish the relationship among the constraint qualifications of MPVC, and we finish with some concluding remarks in section \ref{Conc}.
	
		\section{Preliminaries}
\label{Pre}
		Here, we adopt the following notations for index sets from \cite{hoheisel 2} for an arbitrary feasible point $x^\ast$.
		\begin{eqnarray*}
			I_g&:=& \{i ~|~ g_i(x^\ast) = 0 \}\\
			I_+&:=& \{i ~|~ H_i(x^\ast) > 0 \},~~~~I_0 := \{i ~|~ H_i(x^\ast) = 0 \}\\
			I_{+0}&:=& \{i~|~H_i(x^\ast) > 0 ~,~ G_i(x^\ast) = 0\},~~~I_{+-}:= \{i~|~H_i(x^\ast) > 0 ~~ G_i(x^\ast) < 0\}\\
			I_{0+} &:=& \{i~|~H_i(x^\ast) = 0 ~,~ G_i(x^\ast) > 0\},~~~
			I_{0-}:= \{i~|~H_i(x^\ast) = 0 ~,~ G_i(x^\ast) < 0\}\\
I_{00}&:=& \{i~|~H_i(x^\ast) = 0 ~,~ G_i(x^\ast) = 0\}.
		\end{eqnarray*}
		Next,  we recall concepts of well defined cones from non smooth analysis \cite{Rocka}.
		\begin{Definition}
			{\rm (1) Let $C \subset \mathbb{R}^n$ be a nonempty closed set and $x^\ast \in C$. The (\emph{Bouligand})  tangent cone (or  \emph{contingent} cone) of $C$ at $x^\ast$ is defined as
			\begin{eqnarray*}
				T_C(x^\ast) &:=& \{ d \in \mathbb{R}^n ~|~ \exists \{x^k\} \rightarrow_C x^\ast, ~\{t_k\} \downarrow 0 ~:~\frac{x^k - x^\ast}{t_k} \rightarrow d \} \\
				&:=& \{ d \in \mathbb{R}^n ~|~ \exists \{d^k\} \rightarrow d, ~\{t_k\} \downarrow 0 ~:~x^\ast + t_k d^k \in C ~~\forall~k \in \mathbb{N} \},
			\end{eqnarray*}
			where $\{x^k\} \rightarrow_C x^\ast$ denotes a sequence $\{x^k\}$ converging to $x^\ast$ and satisfying $x^k \in C ~\forall~ k \in \mathbb{N}$.\\
			The vector $d \in T_{C}(x^\ast)$ is called a tangent vector to ${C}$ at $x^\ast$. \\\\
		{\rm (2)}  Let $C \subset \mathbb{R}^n$ be a nonempty closed set and $x^\ast \in C$. The  \emph{Fr\'echet} normal cone of $C$ at $x^\ast$ is defined as
			\begin{equation*}
			N_C^F(x^\ast)~:=~T_C(x^\ast)^\circ
			\end{equation*}
			{\rm (3)} Let $C \subset \mathbb{R}^n$ be a nonempty closed set and $x^\ast \in C$. The \emph{limiting} normal cone of $C$ at $x^\ast$ is defined as	
			\begin{equation*}
			N_C(x^\ast) ~:=~\{d \in \mathbb{R}^n ~|~\exists \{x^k\} \rightarrow_C x^\ast, d^k \in N^F_C(x^k)~:~ d^k \rightarrow d \}.
			\end{equation*}}
		\end{Definition}
The graph of the  multifunction
$\Phi : \mathbb{R}^n \rightrightarrows \mathbb{R}^m$ is defined as $gph \Phi:=  \{ (x,y)~|y \in \Phi(x)\}$. For $x \in \mathbb R^n$ and $\delta>0$, the set $\mathbb B(x, \delta):= \{ y \in \mathbb R^{n}|~\|y-x \| < \delta \}$ is open ball. Without loss of generality, the $\|.\|$ will be taken as $l_1$-norm.\\

		Now, we discuss some well known constraint qualifications of nonlinear programming in the context of MPVC.
\begin{Definition} {\rm \cite{hoheisel 2}
	\label{Def1} A vector $x^\ast \in \mathcal{C}$ is said to satisfy MPVC-\emph{linearly independent constraint qualification}  (or MPVC-LICQ) if the gradients \\
	$~~~~~~~~~~~\{\nabla g_i(x^\ast) | i \in I_g\} ~\cup ~ \{\nabla h_i(x^\ast) | i = 1,...,p\} ~\cup ~ \{\nabla G_i(x^\ast) | i \in I_{+0}~\cup~ I_{00}\} \\~~~~~~~~~~~ \cup~ \{\nabla H_i(x^\ast) | i \in I_0\} $ \\
	are linearly independent.}
	\end{Definition}
		\begin{Definition}
			{\rm \cite{hoheisel} A vector $x^\ast \in \mathcal{C}$ for (MPVC) satisfies MPVC-\emph{Mangasarian Fromovitz constraint qualification} (or MPVC-MFCQ) if the \\
			$~~~~~~~~~~~~~~\nabla h_i(x^\ast) ~~(i= 1, ..., p),~~~~\nabla H_i(x^\ast)~~(i \in I_{0+} \cup I_{00})$\\
			are linearly independent and there exist a vector $d \in \mathbb{R}^n$ such that \\
			$~~~~~~~~\nabla h_i(x^\ast)d =0 ~~~(i=1,...,p), ~~~~\nabla H_i (x^\ast)^T d = 0 ~~~(i \in I_{0+} \cup I_{00}) \\
			~~~~~~~\nabla g_i (x^\ast)^T d < 0 ~~~(i \in I_g), ~~~~\nabla H_i (x^\ast)^T d > 0 ~~~(i \in I_{0-}),\\
			~~~~~~~	\nabla G_i(x^\ast)^T d < 0 ~~~(i \in I_{+0} \cup I_{00}). $}
		\end{Definition}
		In the spirit of MPEC-GMFCQ  \cite{Kanzow 2}, the following MPVC-GMFCQ is defined as follows.
		\begin{Definition}
			\label{gmfcq 1}
			{\rm \cite{q hu}  A vector $x^\ast \in \mathcal{C}$ is said to satisfy \textit{MPVC-generalized MFCQ} (MPVC-GMFCQ) if there is no multiplier $(\lambda, \mu, \eta^H, \eta^G)~\neq~0$ such that \\
				
				$ {\rm (i)}~\sum_{i=1}^{m} \lambda_i \nabla g_i(x^\ast) + \sum_{i=1}^{l} \mu_i \nabla h_i (x^\ast) + \sum_{i=1}^{q} \eta^G_i \nabla G_i(x^\ast)
				-\sum_{i=1}^{q} \eta^H_i \nabla H_i(x^\ast)~=~0 $\\
				
				$ {\rm (ii)} ~~~~\lambda_i \geqslant 0 ~~~\forall~i \in I_g,~~~\lambda_i = 0 ~~~\forall~i \notin I_g$
				\begin{eqnarray*}
					{\rm and}~~~\eta_i^G &=& 0 ~~~~\forall~i \in I_{+-} \cup I_{0-} \cup I_{0+},~~\eta_i^G  \geqslant 0~\forall i \in I_{+0} \cup I_{00}\\
					\eta_i^H &=& 0 ~~~~\forall~i \in I_+,~~\eta_i^H  \geqslant  0~\forall ~i \in I_{0-}~~ {\rm and}~~\eta_i^H ~ {\rm is}~ {\rm free}~ \forall~ i \in I_{0+}\\
					\eta_i^H  \eta_i^G &=& 0~~~~\forall~ i \in I_{00}
				\end{eqnarray*}}
		\end{Definition}
		\begin{Definition}
			\label{acq}
			{\rm \cite{hoheisel 1}}
		MPVC-Abadie CQ (or MPVC-ACQ) holds at $x^\ast \in \mathcal{C}$, if
		\begin{equation*}
		T_\mathcal{C} (x^\ast) ~=~ L_{MPVC} (x^\ast)
		\end{equation*}
		where $L_{MPVC}(x^\ast)$ is the MPVC-linearized tangent cone and defined as {\rm \cite[Lemma 3.2.1]{hoheisel 1}}
				\begin{eqnarray*}
					L_{MPVC}(x^\ast)	&=& \{ d \in \mathbb{R}^n ~|~ \nabla g_i(x^\ast)^T d \leq0 ~~~~\forall ~i \in I_g \\
					&&~~~~~~~~~~~~~~ \nabla h_i(x^\ast)^T d = 0 ~~~~\forall ~i = 1,...,p \\
					&&~~~~~~~~~~~~~~ \nabla H_i(x^\ast)^T d = 0 ~~~~\forall ~i \in I_{0+} \\
					&&~~~~~~~~~~~~~~ \nabla H_i(x^\ast)^T d \geq 0 ~~~~\forall ~i \in I_{00} \cup I_{0-} \\
					&&~~~~~~~~~~~~~~ \nabla G_i(x^\ast)^T d \leq 0 ~~~~\forall ~i \in I_{+0} \}
				\end{eqnarray*}
			\end{Definition}
		\begin{Definition}
			{\rm \cite{q hu}}  A vector $x^\ast \in \mathcal{C}$ is said to satisfy \textit{MPVC-generalized pseudonormality}, if there is no multiplier $(\lambda, \mu, \eta^H, \eta^G)~\neq~0$ such that \\
			
			$  {\rm (i)} ~\sum_{i=1}^{m} \lambda_i \nabla g_i(x^\ast) + \sum_{i=1}^{l} \mu_i \nabla h_i (x^\ast) + \sum_{i=1}^{q} \eta^G_i \nabla G_i(x^\ast)
			-\sum_{i=1}^{q} \eta^H_i \nabla H_i(x^\ast)~=~0 $\\
			
			$ {\rm (ii)} ~~~~\lambda_i \geqslant 0 ~~~\forall~i \in I_g,~~~\lambda_i = 0 ~~~\forall~i \notin I_g$
				\begin{eqnarray*}
					{\rm and}~~~\eta_i^G &=& 0 ~~~~\forall~i \in I_{+-} \cup I_{0-} \cup I_{0+},~~\eta_i^G  \geqslant 0~\forall i \in I_{+0} \cup I_{00}\\
					\eta_i^H &=& 0 ~~~~\forall~i \in I_+,~~\eta_i^H  \geqslant  0~\forall ~i \in I_{0-}~~ {\rm and}~~\eta_i^H ~ {\rm is}~ {\rm free}~ \forall~ i \in I_{0+}\\
					\eta_i^H  \eta_i^G &=& 0~~~~\forall~ i \in I_{00}
				\end{eqnarray*}
			
			$ {\rm (iii)} $ there is a sequence $\{x^k\} \rightarrow x^\ast$ such that the following is true for all $k \in \mathbb{N}$ \\
			\begin{equation*}
			\sum_{i=1}^{m} \lambda_i g_i(x^k)~+~\sum_{i=1}^{p} \mu_i h_i(x^k)~+~\sum_{i=1}^{q} \eta_i^G G_i(x^k)~-~\sum_{i=1}^{q} \eta_i^H H_i(x^k)~>~0.
			\end{equation*}
		\end{Definition}
			\begin{Definition}
			 A vector $x^\ast \in \mathcal{C}$ is said to satisfy \textit{MPVC-generalized quasinormality}, if there is no multiplier $(\lambda, \mu, \eta^H, \eta^G)~\neq~0$ such that \\
			
			 $ {\rm (i)} \sum_{i=1}^{m} \lambda_i \nabla g_i(x^\ast) + \sum_{i=1}^{l} \mu_i \nabla h_i (x^\ast) + \sum_{i=1}^{q} \eta^G_i \nabla G_i(x^\ast)
			 -\sum_{i=1}^{q} \eta^H_i \nabla H_i(x^\ast)~=~0 $\\
			
			 $ {\rm (ii)} ~~~~\lambda_i \geqslant 0 ~~~\forall~i \in I_g,~~~\lambda_i = 0 ~~~\forall~i \notin I_g$
				\begin{eqnarray*}
					{\rm and}~~~\eta_i^G &=& 0 ~~~~\forall~i \in I_{+-} \cup I_{0-} \cup I_{0+},~~\eta_i^G  \geqslant 0~\forall i \in I_{+0} \cup I_{00}\\
					\eta_i^H &=& 0 ~~~~\forall~i \in I_+,~~\eta_i^H  \geqslant  0~\forall ~i \in I_{0-}~~ {\rm and}~~\eta_i^H ~ {\rm is}~ {\rm free}~ \forall~ i \in I_{0+}\\
					\eta_i^H  \eta_i^G &=& 0~~~~\forall~ i \in I_{00}
				\end{eqnarray*}					
			 $ {\rm (iii)} $ There is a sequence $\{x^k\} \rightarrow x^\ast$ such that the following is true $\forall k \in \mathbb{N} $, we have
			 \begin{eqnarray*}
			 	\lambda_i &>&0  ~~\Rightarrow ~~\lambda_i g_i(x^k) >  0 ~~~ \{i=1,...,m\}\\
			 	\mu_i &\neq&0  ~~\Rightarrow ~~\mu_i h_i(x^k) >  0 ~~~ \{i=1,...,p\}\\
			 	\eta^H_i &\neq&0  ~~\Rightarrow ~~\eta^H_i H_i(x^k) <  0 ~~~ \{i=1,...,q\}\\
			 	\eta^G_i &>&0  ~~\Rightarrow ~~\eta^G_i G_i(x^k) >  0 ~~~ \{i=1,...,q\}	
			 \end{eqnarray*}	
			\end{Definition}
		we have following relationships in these CQ as shown in \cite[Proposition 2.1]{q hu} and further implication  in \cite{khare}.
		\begin{Proposition}\label{implcq}
			MPVC-LICQ $\Rightarrow$ MPVC-MFCQ $\Rightarrow$	MPVC-GMFCQ $\Rightarrow$ MPVC-generalized pseudonormality $\Rightarrow$ MPVC-generalized quasinormality.
\end{Proposition}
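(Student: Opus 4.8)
The plan is to prove Proposition \ref{implcq} one arrow at a time; each link is either a short linear-algebra argument or a term-by-term sign estimate over the MPVC index sets, so nothing deep is needed beyond careful bookkeeping.

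\emph{MPVC-LICQ $\Rightarrow$ MPVC-MFCQ.} Since $I_{0+}\cup I_{00}\subseteq I_0$, the family $\{\nabla h_i(x^\ast)\}_{i=1}^{p}\cup\{\nabla H_i(x^\ast)\}_{i\in I_{0+}\cup I_{00}}$ is a subfamily of the linearly independent LICQ family, which immediately gives the rank requirement in the definition of MPVC-MFCQ. For the MFCQ direction, observe that every gradient occurring in the MFCQ system --- $\nabla h_i$ $(i=1,\dots,p)$, $\nabla H_i$ $(i\in I_{0+}\cup I_{00}\cup I_{0-})$, $\nabla g_i$ $(i\in I_g)$ and $\nabla G_i$ $(i\in I_{+0}\cup I_{00})$ --- belongs to the LICQ family, so the associated linear map $d\mapsto\bigl(\nabla h_i^T d, \nabla H_i^T d, \nabla g_i^T d, \nabla G_i^T d\bigr)$ is surjective. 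I would then take $d$ to be any preimage of the vector whose entries are $0$ on the $h$-rows, $0$ on the $H$-rows indexed by $I_{0+}\cup I_{00}$, equal to $+1$ on the $H$-rows indexed by $I_{0-}$, equal to $-1$ on the $g$-rows and equal to $-1$ on the $G$-rows; this $d$ satisfies all the relations required by MPVC-MFCQ.

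\emph{MPVC-MFCQ $\Rightarrow$ MPVC-GMFCQ.} I would argue by contraposition. Assume $(\lambda,\mu,\eta^H,\eta^G)\neq 0$ satisfies (i)--(ii) in the definition of MPVC-GMFCQ and let $d$ be an MFCQ direction. Taking the inner product of (i) with $d$ and inserting the sign data --- $\lambda_i\geqslant 0$ with $\nabla g_i^T d<0$ on $I_g$; $\nabla h_i^T d=0$; $\eta^G_i\geqslant 0$ with $\nabla G_i^T d<0$ on $I_{+0}\cup I_{00}$ and $\eta^G_i=0$ elsewhere; $\eta^H_i=0$ on $I_+$, $\eta^H_i\geqslant 0$ with $\nabla H_i^T d>0$ on $I_{0-}$, and $\nabla H_i^T d=0$ on $I_{0+}\cup I_{00}$ --- one obtains a sum of non-positive terms that equals $0$. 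Hence every term vanishes, which forces $\lambda\equiv 0$, $\eta^G\equiv 0$, and $\eta^H_i=0$ for all $i\in I_+\cup I_{0-}$. Equation (i) then collapses to $\sum_{i=1}^{p}\mu_i\nabla h_i(x^\ast)-\sum_{i\in I_{0+}\cup I_{00}}\eta^H_i\nabla H_i(x^\ast)=0$, and the rank clause of MPVC-MFCQ forces the surviving components $\mu$ and $\eta^H|_{I_{0+}\cup I_{00}}$ to vanish too --- contradicting $(\lambda,\mu,\eta^H,\eta^G)\neq 0$.

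The last two arrows are essentially formal. For MPVC-GMFCQ $\Rightarrow$ MPVC-generalized pseudonormality, any multiplier violating pseudonormality in particular satisfies (i)--(ii), hence violates MPVC-GMFCQ, and the contrapositive gives the implication. For MPVC-generalized pseudonormality $\Rightarrow$ MPVC-generalized quasinormality, suppose quasinormality fails, witnessed by a nonzero $(\lambda,\mu,\eta^H,\eta^G)$ and a sequence $\{x^k\}\to x^\ast$. Along that sequence each of the quantities $\lambda_i g_i(x^k)$, $\mu_i h_i(x^k)$, $\eta^G_i G_i(x^k)$ and $-\eta^H_i H_i(x^k)$ is $\geqslant 0$: it equals $0$ when the relevant multiplier component is $0$ (using $\lambda_i\geqslant 0$ and $\eta^G_i\geqslant 0$), and it is strictly positive otherwise by the quasinormality implications. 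Since the multiplier is nonzero, at least one of these terms is strictly positive, so the sum is $>0$ for every $k$, which is precisely condition (iii) in the definition of MPVC-generalized pseudonormality; hence pseudonormality fails too.

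I expect the step MPVC-MFCQ $\Rightarrow$ MPVC-GMFCQ to be the main obstacle: the sign bookkeeping over $I_g, I_{+0}, I_{+-}, I_{0+}, I_{0-}, I_{00}$ must be arranged so that the pairing of (i) with $d$ collapses cleanly to a one-signed sum, and one must not forget that the components $\eta^H_i$ with $i\in I_{0+}\cup I_{00}$ survive this collapse and have to be eliminated separately through the linear-independence clause of MPVC-MFCQ. The other three implications require only a set inclusion or a term-by-term positivity argument.
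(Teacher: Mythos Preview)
Your proof is correct. Each of the four implications is handled by the natural argument: surjectivity of the gradient map under LICQ to manufacture an MFCQ direction; pairing the multiplier equation with that direction to collapse it to a one-signed sum and then invoking the rank clause; and the purely formal contrapositive for the last two arrows. Your sign bookkeeping on the $I_{00}$ block is right --- the crucial observation that $\nabla H_i(x^\ast)^T d=0$ there makes the free sign of $\eta^H_i$ on $I_{00}$ harmless in the pairing, and these components are then killed by the linear-independence clause.

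The paper itself does not prove this proposition: it simply records the chain of implications and refers to \cite[Proposition~2.1]{q hu} for the first three arrows and to \cite{khare} for the last one. So your write-up is genuinely different in that it is self-contained rather than a citation. What your approach buys is independence from those sources and transparency about exactly which structural features of each CQ drive each implication (in particular, the two-stage elimination in MPVC-MFCQ $\Rightarrow$ MPVC-GMFCQ, where the strict inequalities kill $\lambda,\eta^G,\eta^H|_{I_{0-}}$ and the rank clause kills $\mu,\eta^H|_{I_{0+}\cup I_{00}}$). What the paper's citation approach buys is brevity and an appeal to results already vetted in the literature. Either is acceptable; yours would be the right choice if the paper were intended to be self-contained on this point.
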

\begin{Remark}
The implications in Proposition \ref{implcq} are strict. First and last implications are obviously strict. We illustrate in the following examples that MPVC-GMFCQ is srtrictly weaker than MPVC-MFCQ and MPVC-generalized pseudonormality is strictly weaker than MPVC-GMFCQ.
\end{Remark}
\begin{Example}
\label{exmpl1}consider the following MPVC
\begin{eqnarray*}
		\min ~f(x)~~~~ &&\\
		g(x)= x_1 - x_2 &\leqslant&0 \\
			H(x) = x_1 &\geqslant& 0 \\
			G(x) H(x) = x_1 x_2 &\leqslant&0
		\end{eqnarray*}
	here $x^\ast = (0, 0)$ is feasible point and all constraints are active at $(0,0)$. At $x^\ast = (0,0)$, MPVC-MFCQ does not hold: since
$~\nabla H(x^\ast)= \left(
                        \begin{array}{c}
                          1 \\
                          0 \\
                        \end{array}
                      \right)$   is linearly independent\\
			 and if there exist a vector $d = (d_1,d_2)^T\in \mathbb{R}^2$ such that
			\begin{eqnarray*}
			\nabla g (x^\ast)^T d&:=& \left(\begin{array}{cc}1 & -1 \end{array}\right)
			 \left( \begin{array}{c}
                          d_1 \\
                          d_2 \\
                        \end{array}
                      \right) < 0\\
			\nabla H (x^\ast)^T d &:=& \left(\begin{array}{cc}1 & 0 \end{array}\right)
			 \left( \begin{array}{c}
                          d_1 \\
                          d_2 \\
                        \end{array}
                      \right)= 0 \\
				\nabla G(x^\ast)^T d &:= & \left(\begin{array}{cc}0 & 1\end{array}\right)
			 \left( \begin{array}{c}
                          d_1 \\
                          d_2 \\
                        \end{array}
                      \right)< 0.
                      \end{eqnarray*}
Then $d_2 \geqslant 0$ and $d_2<0$ both hold, which is a contradiction. Hence, MPVC-MFCQ does not hold.
 But, by  definition MPVC-GMFCQ obviously holds. For, suppose
     \begin{eqnarray*}
     \lambda  \left( \begin{array}{c}
                           1 \\
                          -1\\
                        \end{array}
                      \right) + \eta^{G} \left( \begin{array}{c}
                          0 \\
                          1 \\
                        \end{array}
                      \right)-\eta^{H} \left( \begin{array}{c}
                          1 \\
                          0 \\
                        \end{array}
                      \right) =  \left( \begin{array}{c}
                          0 \\
                          0 \\
                        \end{array}
                      \right)
     \end{eqnarray*}
     with restrictions $\lambda \geqslant 0, ~\eta^{G}\geqslant 0$ and $ \eta^{H} \eta^{G} =0$. Then, we have $\lambda =\eta^{G} = \eta^{H}=0$.
\end{Example}
We have another example to illustrate that MPVC-generalized pseudonormality is strictly weaker than MPVC-GMFCQ.
\begin{Example}
\label{exmpl2}
Consider the typical MPVC problem in $\mathbb R^2$
	\begin{eqnarray*}
	\min ~x_1^2 + x_2^2 &&\\
	g(x) = x_1 &\leq& 0 \\
	H(x) = x_2 &\geq& 0\\
	G(x) H(x) = -x_1 x_2 &\leq& 0.	
	\end{eqnarray*}
	Then $x^\ast = (0,0)$ is a feasible point and and all constraints are active at $x^\ast$. To prove that  MPVC-GMFCQ fails to hold at $x^\ast$, we need to find $(\lambda, \eta^{G}, \eta^{H}) \neq 0$  such that
     \begin{eqnarray*}
     \lambda  \left( \begin{array}{c}
                           1 \\
                          0\\
                        \end{array}
                      \right) + \eta^{G} \left( \begin{array}{c}
                          -1 \\
                          0 \\
                        \end{array}
                      \right)-\eta^{H} \left( \begin{array}{c}
                          0 \\
                          1 \\
                        \end{array}
                     \right) =  \left( \begin{array}{c}
                          0 \\
                          0 \\
                        \end{array}
                      \right)
     \end{eqnarray*}
     with restrictions $\lambda \geqslant 0, ~\eta^{G}\geqslant 0$ and $ \eta^{H} \eta^{G} =0$. Then, clearly, all the multipliers with above properties can be taken as    $(\lambda, \eta^{G}, \eta^{H}) =c (1, 1, 0)$ with $c>0$. Thus, MPVC-GMFCQ is violated at $x^\ast$.\\
	
	 On the other hand
	 \begin{eqnarray*}
	 \lambda x_1^k + \eta^G(-x_1^k)-\eta^H x_2^k = c x_1^k - cx_1^k-0=0
	 \end{eqnarray*}
	 holds for all sequences $\{x^k\} \rightarrow x^\ast$. Hence, MPVC-generalized pseudonormality holds.
	 \end{Example}
		\section{An Exact Penalty Result for MPVC}
\label{Expen}
	Here,  we provide the exactness result for MPVC-tailored penalty function introduced in \cite[equation (26)]{hoheisel} under MPVC-generalized quasinormality, which is much weaker than MPVC-MFCQ.
In order to derive exact penalty function, we rewrite the MPVC first in  vector form as :
\begin{equation}
\label{mpvc vector}
\min ~ f(x)	~~~~s.t.~~F(x) ~\in ~ \Delta,
\end{equation}
where
\begin{equation*}
F(x)~:= ~\left(\begin{array}{cccc}
g_i(x)_{i=1,...,m}\\h_i(x)_{i=1,...,l}\\ \left( \begin{array}{cc}
G_i(x)\\H_i(x)
\end{array}\right)_{i=1,...,q}
\end{array} \right)
\end{equation*}
and
\begin{equation*}
\Delta ~ :=~ \left( \begin{array}{ccc}
(-\infty, 0]^m \\ \{0\}^l \\ \Omega^q
\end{array}\right)
\end{equation*}
with
\begin{equation*}
\Omega ~:=~\{(a,b) \in \mathbb{R}^2 ~|~b \geq 0,~ ab \leq 0  \}
\end{equation*}
 Since we are studying exactness of MPVC-tailored penalized problem, so we have to write first a penalty function  associated with (\ref{mpvc vector}) as (see \cite{hoheisel})
\begin{equation}
\label{penalty}
	P_\alpha(x) ~:=~ f(x) + \alpha ~{\rm dist}_\Delta (F(x))
\end{equation}
or
\begin{equation*}
P_\alpha(x) ~:=~ f(x) + \alpha~\left[ \sum_{i=1}^{m}{\rm dist}_{(-\infty, 0]} (g_i(x)) + \sum_{j=1}^{l} {\rm dist}_{\{0\}} (h_j(x)) + \sum_{i=1}^{q} dist_\Omega (G_l(x), H_l(x))\right]	
\end{equation*}
\begin{equation}
P_\alpha(x) ~:=~ f(x) + \alpha~ \left(  ||g^+(x)||_1 + ||h(x)||_1 + \sum_{i=1}^{q} dist_\Omega (G_l(x), H_l(x))\right)
\end{equation}
 where ${\rm dist}_S(x)$ is the distance in $l_1$-norm from $x$ to set $S$  and $g^+(x)=\max \{0, g(x)\}$, here max function $g^+$ is defined cmponentwise. Further, by using distance function for vanishing constraint \cite[Lemma 4.6]{hoheisel}, we have
\begin{equation*}
	P_\alpha (x) = f(x) + \alpha \left [ \sum_{i=1}^{m} | g_i^+(x)| + \sum_{j=1}^{l} |h_j(x)| + \sum_{i=1}^{q} \max \{ 0, -H_i(x), \min\{G_i(x), H_i(x)\} \} \right ]
\end{equation*}
\hspace*{0.5 cm} In order to derive exact penalty condition, we need some extra results. Here we have such result from \cite[Theorem 5.2]{khare} which states about the local error bound property of MPVC at a feasible point.
\begin{Lemma}
	\label{error bound}
	Let $x^\ast \in \mathcal{C}$ the feasible region of MPVC. If $x^\ast$ is MPVC-generalized quasinormal, then there are $\delta, c > 0$ such that
	\begin{equation}
	\label{error bound lemma}
	dist_\mathcal{C}(x) \leqslant c \left( ||h(x)||_1 + ||g^+(x)||_1 + \sum_{i=1}^{q} dist_\Omega (G_l(x), H_l(x)) \right)
	\end{equation}
	holds for all $x \in \mathbb{B}(x^\ast, \delta / 2)$.
	\label{thm last}
\end{Lemma}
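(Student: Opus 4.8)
\emph{Proof plan.} The asserted inequality is exactly a local error bound for the feasible set $\mathcal C=F^{-1}(\Delta)$, its right-hand side being $\mathrm{dist}_\Delta\!\big(F(x)\big)$ written block by block via the product structure of $\Delta=(-\infty,0]^m\times\{0\}^l\times\Omega^q$ and \cite[Lemma~4.6]{hoheisel} for the $\Omega$-blocks. I would argue by contradiction with an Ekeland-variational-principle argument. If no such $\delta,c$ existed there would be $x^k\to x^\ast$ with $x^k\notin\mathcal C$ and $\mathrm{dist}_\mathcal{C}(x^k)>k\,\phi(x^k)$, where $\phi:=\mathrm{dist}_\Delta\circ F$ is continuous, nonnegative, and vanishes precisely on $\mathcal C$. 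Since $x^k$ is an $\epsilon_k$-minimiser of $\phi$ with $\epsilon_k:=\phi(x^k)>0$, Ekeland's principle applied with radius $\lambda_k:=\tfrac12\mathrm{dist}_\mathcal{C}(x^k)$ yields $y^k$ with $\|y^k-x^k\|\le\lambda_k$, $\phi(y^k)\le\phi(x^k)$, and $y^k$ a global minimiser of $x\mapsto\phi(x)+\tfrac{\epsilon_k}{\lambda_k}\|x-y^k\|$. Here $\tfrac{\epsilon_k}{\lambda_k}\le\tfrac2k\to0$, $y^k\to x^\ast$, and $\mathrm{dist}_\mathcal{C}(y^k)\ge\tfrac12\mathrm{dist}_\mathcal{C}(x^k)>0$, so $F(y^k)\notin\Delta$ and $\phi(y^k)>0$.

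Next I would apply Fermat's rule together with the limiting sum rule and the $C^1$ chain rule at this unconstrained minimiser to get $v^k\in\partial\,\mathrm{dist}_\Delta\!\big(F(y^k)\big)$ and a dual-unit-ball vector $b^k$ with $\nabla F(y^k)^T v^k+\tfrac{\epsilon_k}{\lambda_k}b^k=0$. Because $\mathrm{dist}_\Delta$ is separable over the blocks of $\Delta$, the vector $v^k$ splits as $\big(\lambda^k,\mu^k,(\eta^{G,k}_i,-\eta^{H,k}_i)_i\big)$ with $\lambda^k_i\in\partial(\,\cdot\,)^+(g_i(y^k))\subseteq[0,1]$ (and $=0$ when $g_i(y^k)<0$), $\mu^k_i\in\partial|\!\cdot\!|(h_i(y^k))\subseteq[-1,1]$ (and $=0$ when $h_i(y^k)=0$), and $(\eta^{G,k}_i,-\eta^{H,k}_i)\in\partial\,\mathrm{dist}_\Omega(G_i(y^k),H_i(y^k))$; moreover, since $F(y^k)\notin\Delta$, every subgradient of the distance function there has $\|v^k\|_\infty=1$. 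Hence $v^k$ stays on a compact set and, along a subsequence, $v^k\to v=(\lambda,\mu,\eta^G,\eta^H)$ with $\|v\|_\infty=1$, in particular $v\neq0$. Passing to the limit in the displayed identity (using $\nabla F(y^k)\to\nabla F(x^\ast)$ and $\tfrac{\epsilon_k}{\lambda_k}b^k\to0$) produces condition (i) of MPVC-generalized quasinormality with this nonzero multiplier.

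The sign relations (ii) then follow by taking limits of the per-block sign constraints on $v^k$ combined with stability of the index sets along $y^k\to x^\ast$: for $i\notin I_g$ one has $g_i(y^k)<0$ eventually, hence $\lambda_i=0$, and $\lambda_i\ge0$ always; for $i\in I_{+-}$ the pair $(G_i,H_i)(y^k)$ eventually lies in $\mathrm{int}\,\Omega$, so $\eta^G_i=\eta^H_i=0$; for $i\in I_{+}$ the set $\Omega$ is locally the half-plane $\{a\le0\}$ around $(G_i,H_i)(y^k)$, forcing $\eta^H_i=0$ and $\eta^G_i\ge0$; for $i\in I_{0-}$ it is locally $\{b\ge0\}$, forcing $\eta^G_i=0$ and $\eta^H_i\ge0$; for $i\in I_{0+}$ it is locally $\{b=0\}$, forcing $\eta^G_i=0$ with $\eta^H_i$ free; and for the biactive indices $i\in I_{00}$ one invokes outer semicontinuity of the limiting normal cone to pass from $\partial\,\mathrm{dist}_\Omega$ at the nearby exterior points into $N_\Omega(0,0)$, whose explicit description gives exactly $\eta^G_i\ge0$ and $\eta^G_i\eta^H_i=0$. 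Finally, I would argue that the sequence $\{y^k\}$ itself witnesses (iii): if $\lambda_i>0$ then $\lambda^k_i>0$ for large $k$, which, given $\lambda^k_i\in\partial(\,\cdot\,)^+(g_i(y^k))$, is consistent only with $g_i(y^k)\ge0$, and analogously for $\mu_i\neq0$, $\eta^H_i\neq0$, $\eta^G_i>0$. Having produced a nonzero multiplier satisfying (i)--(iii) at $x^\ast$ contradicts MPVC-generalized quasinormality, so the error bound holds on some $\mathbb B(x^\ast,\delta/2)$.

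I expect the genuine obstacle to be the last step: promoting the nonstrict $g_i(y^k)\ge0$ to the strict $\lambda_i g_i(y^k)>0$ demanded by condition (iii) at indices where a constraint happens to vanish along $\{y^k\}$ — this is precisely where quasinormality is more delicate than pseudonormality, and it calls either for passing to a further subsequence on which each constraint keeps a fixed sign and then perturbing $y^k$ along a direction built from the active gradients that strictly activates the offending constraints without altering the limiting multiplier, or for quoting the refined construction of \cite[Theorem~5.2]{khare}. A secondary technical burden is the nonsmooth calculus for the \emph{nonconvex} set $\Omega$ — computing $\partial\,\mathrm{dist}_\Omega$ at exterior points and $N_\Omega(0,0)$ together with their semicontinuity — which is what couples the analytic limiting argument to the combinatorial index-set bookkeeping characteristic of MPVC.
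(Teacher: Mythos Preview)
The paper supplies no proof of this lemma: it is imported wholesale from \cite[Theorem~5.2]{khare} with a single introductory sentence, so there is no in-paper argument against which to compare your sketch. Your Ekeland--Fermat construction is the standard route to local error bounds under quasinormality-type hypotheses (the MPEC analogue is carried out in \cite{Kanzow1}), and it is in all likelihood what \cite{khare} does in detail; indeed you yourself list ``quoting \cite[Theorem~5.2]{khare}'' as one option for closing the last step, which is precisely the paper's choice.

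You have also located the one genuine difficulty correctly. Membership $\lambda^k_i\in\partial(\cdot)^+(g_i(y^k))$ with $\lambda^k_i>0$ forces only $g_i(y^k)\ge 0$, whereas part~(iii) of MPVC-generalized quasinormality demands the strict inequality $g_i(y^k)>0$, and the same issue recurs in every block. This is exactly where quasinormality is sharper than pseudonormality and where the real work lies; your proposed perturbation of $y^k$ along active gradients is the right instinct, though making it rigorous for the $\Omega$-blocks requires care. One smaller caution: the assertion $\|v^k\|_\infty=1$ relies on $\partial\,\mathrm{dist}_S(z)$ lying on the dual unit \emph{sphere} whenever $z\notin S$, which for the nonconvex set $\Omega$ is true but not a one-liner --- it follows from the proximal-normal description of $\partial\,\mathrm{dist}_S$ at exterior points (cf.\ \cite{Rocka}) and should be stated explicitly.
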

With the help of above Lemma we can conclude the main result of this section.
\begin{Theorem}
\label{thm-extpenl}
Let $x^\ast $ be a local minimizer of MPVC  with $f$ locally Lipschitz at $x^\ast$ with Lipschitz constant $L > 0$. If MPVC-generalized-quasinormality holds at $x^\ast$, then the penality function $P_\alpha$ defined in (\ref{penalty}) is exact at $x^\ast$.
\end{Theorem}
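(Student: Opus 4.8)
The plan is to combine the local error bound of Lemma \ref{error bound} with the local optimality of $x^\ast$ and the Lipschitz property of $f$. Recall that $P_\alpha$ being \emph{exact} at $x^\ast$ means there is $\bar\alpha>0$ such that $x^\ast$ is a local minimizer of $P_\alpha$ for every $\alpha\ge\bar\alpha$; I will show that $\bar\alpha=Lc$ works, where $c>0$ is the error bound constant furnished by Lemma \ref{error bound}. Throughout I abbreviate the constraint residual by
\[
r(x)~:=~\|g^+(x)\|_1+\|h(x)\|_1+\sum_{i=1}^{q}\mathrm{dist}_\Omega(G_i(x),H_i(x)),
\]
which, by the product structure of $\Delta$ and the vanishing-constraint distance formula \cite[Lemma 4.6]{hoheisel}, equals $\mathrm{dist}_\Delta(F(x))$, so that $P_\alpha(x)=f(x)+\alpha\,r(x)$.

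First I would fix the relevant radii. Since $x^\ast$ is a local minimizer of the MPVC, there is $\varepsilon_1>0$ with $f(x^\ast)\le f(y)$ for all $y\in\mathcal C\cap\mathbb B(x^\ast,\varepsilon_1)$; since $f$ is locally Lipschitz with constant $L$, there is $\varepsilon_2>0$ on which the Lipschitz estimate holds; and Lemma \ref{error bound} supplies $\delta,c>0$ for which \eqref{error bound lemma} holds on $\mathbb B(x^\ast,\delta/2)$. Put $\rho:=\tfrac14\min\{\varepsilon_1,\varepsilon_2,\delta\}$ and restrict attention to $x\in\mathbb B(x^\ast,\rho)$. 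For such $x$, let $\bar x$ be a nearest point of the closed set $\mathcal C$ to $x$, so $\|x-\bar x\|_1=\mathrm{dist}_\mathcal C(x)$; since $x^\ast\in\mathcal C$ we get $\mathrm{dist}_\mathcal C(x)\le\|x-x^\ast\|_1\le\rho$, hence $\|\bar x-x^\ast\|_1\le 2\rho$. Thus $x$ lies in $\mathbb B(x^\ast,\delta/2)$ (so \eqref{error bound lemma} applies), and both $x$ and $\bar x$ lie in $\mathbb B(x^\ast,\varepsilon_2)$ and in $\mathbb B(x^\ast,\varepsilon_1)$, so the Lipschitz estimate and the local optimality of $x^\ast$ are available at these points.

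The conclusion then follows by chaining the estimates: using Lipschitz continuity, then $\bar x\in\mathcal C$ together with local optimality and the error bound $\|x-\bar x\|_1\le c\,r(x)$,
\[
P_\alpha(x)=f(x)+\alpha\,r(x)\ \ge\ f(\bar x)-L\|x-\bar x\|_1+\alpha\,r(x)\ \ge\ f(x^\ast)+(\alpha-Lc)\,r(x).
\]
For any $\alpha\ge Lc$ the last summand is nonnegative, so $P_\alpha(x)\ge f(x^\ast)=P_\alpha(x^\ast)$ because $r(x^\ast)=0$; that is, $x^\ast$ minimizes $P_\alpha$ on $\mathbb B(x^\ast,\rho)$, which is exactness. (Note the case $x\in\mathcal C$ is automatically included, since then $r(x)=0$, $\bar x=x$, and the chain reduces to $P_\alpha(x)=f(x)\ge f(x^\ast)$.)

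The argument is short precisely because Lemma \ref{error bound} — whose proof, taken from \cite{khare}, is where MPVC-generalized quasinormality is actually used — does the substantive work. The only points needing care in the write-up are the bookkeeping of neighborhoods, in particular verifying via the factor-of-two bound that the projection $\bar x$ (which exists since $\mathcal C$ is closed, though it need not be unique) stays inside the ball on which $x^\ast$ is optimal, and the identification of the penalty term $\mathrm{dist}_\Delta(F(x))$ with $r(x)$ via the product set $\Delta$ and \cite[Lemma 4.6]{hoheisel}. I do not anticipate any genuine obstacle beyond this.
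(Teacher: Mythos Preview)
Your proposal is correct and follows essentially the same route as the paper: invoke the local error bound from Lemma~\ref{error bound}, project $x$ onto $\mathcal C$, control the projection via the triangle inequality so that local optimality and the Lipschitz bound both apply, and chain these to obtain $P_\alpha(x)\ge P_\alpha(x^\ast)$ for $\alpha\ge Lc$. Your neighborhood bookkeeping is slightly more explicit than the paper's, but the argument is the same.
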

\begin{proof}
	We have local error bound property for smooth MPVC, we redefine the constants $\delta ~{\rm and}~c$ in Lemma  \ref{error bound}, then (\ref{error bound lemma}) can be expressed as follows
	\begin{equation*}
		{\rm dist}_\mathcal{C}(x) \leq c~ {\rm dist}_\Delta (F(x))
	\end{equation*}
	for all $x \in \mathbb{B}(x^\ast, \delta)$. Now choose $\epsilon > 0$ such that $2 \epsilon < \delta$ and $f$ achieves global minimum at $x^\ast$ on $\mathbb{B}(x^\ast, 2 \epsilon) \cap \mathcal{C}$. Since $f$ is locally Lipschitz at $x^\ast$, we can assume, without loss of generality, that $L$ is the Lipschitz constant of $f$ in $\mathbb{B} (x^\ast, 2 \epsilon)$. Then following holds for all $x$  in $ \mathbb{B} (x^\ast, \epsilon)$ :\\
	Choose $x^\pi \in \Pi _\mathcal{C}(x)~=~\{z \in \mathcal{C} ~|~{\rm dist}_{\mathcal{C}}(x) = ||z-c||_1 \}$ arbitrarily, that is, $\Pi_\mathcal{C}(x)$ is the projections of $x$ onto $\mathcal{C}$. Then
	\begin{equation*}
	||x^\pi - x||_1 \leq ||x^\ast - x||_1 \leq \epsilon ~\Rightarrow~||x^\pi - x^\ast||_1 \leq ||x^\pi - x||_1 + ||x-x^\ast||_1 \leq 2 \epsilon
	\end{equation*}
	and consequently, we have
	\begin{eqnarray*}
		f(x^\ast) \leq f(x^\pi) &\leq&f(x) + L ||x^\pi - x||_1\\
		&=& f(x) + L {\rm dist}_\mathcal{C}(x)\\
		&=& f(x) + c L {\rm dist} _\Delta F(x)
	\end{eqnarray*}
	Hence, penalty function $P_\alpha$ is exact with $\bar{\alpha} = cL$.
\end{proof}
\begin{Remark}
	The significance of this result is that it will work even for those points where MPVC-MFCQ does not hold, so this result is stronger than \cite[Corollary 3.9]{hoheisel}.\\\\
 We illustrate this for the MPVC given in  Example \ref{exmpl2}, which is
	\begin{eqnarray*}
	\min ~x_1^2 + x_2^2 &&\\
	g(x) = x_1 &\leq& 0 \\
	H(x) = x_2 &\geq& 0\\
	G(x) H(x) = -x_1 x_2 &\leq& 0.	
	\end{eqnarray*}
	Then $x^\ast = (0,0)$ is global minimizer of this program. At $x^\ast$ MPVC-MFCQ and MPVC-GMFCQ fail to hold, but MPVC-generalized-pseudonormality holds, consequently MPVC-generalized-quasinormality holds.\\ \hspace*{0.5cm} Now, the penalized problem associated to above MPVC stated in Theorem \ref{thm-extpenl} is given as
	\begin{equation*}
	P_\alpha (x) ~=~ x_1^2 + x_2^2 + \alpha [\max \{0,g(x)\} + \max \{0,-H(x), \min \{G(x), H(x)\} \}]
	\end{equation*}
	also has global optimal solution at $x^\ast = (0,0)$ for all $\alpha \geqslant 0$. Hence, $P_\alpha(x)$ is exact at $x^\ast$. \\
\end{Remark}
\section{Relations among the various MPVC-CQs :}
\label{relcq}
This section is devoted to establish some possible relationships among the MPVC-CQs, which we have defined. Though, in section \ref{Pre},  Proposition \ref{implcq} shows  that MPVC-MFCQ implies other weaker CQs. But, it is not known how MPVC-ACQ is related with most of the former CQs in Proposition \ref{implcq}.  In previous section, we have shown that the MPVC-generalised quasinormality is the weakest condition for exactness of the penalty function. On the other hand, the MPVC-ACQ is not strong enough to guarantee the exact penalty results. It suggests that MPVC-ACQ must be weaker than others. Indeed, we show that the MPVC-generalised quasinormality is strictly stronger than MPVC-ACQ. \\\\
\hspace*{0.5 cm} We begin by considering the abstract form of MPVC (\ref{mpvc vector}), again as
\begin{equation}
\label{relcq1}
\min f(x)~~~~~~~{\rm s.t.}~~ ~~F(x)\in \Delta
\end{equation}
where $f$ is locally Lipschitz and $F$ is continuously differentiable.\\
\hspace*{0.5 cm} Now,  we consider the following class of associated perturbed problems
\begin{equation*}
\min f(x)~~~~~~~{\rm s.t.}~~ ~~F(x)~+~p \in \Delta
\end{equation*}
for some parameter $p \in \mathbb{R}^t, t=m+l+q$.\\ %Here is the definition of calmness of $\Theta(p)$ from \cite[Definition 1.1]{burke}
%\begin{Definition}
%	Let $x^\ast$ be feasible for $\Theta(0)$. Then the problem is called calm at $x^\ast$ if there exist $\bar{\alpha} > 0$ and $\epsilon > 0$ such that for all $(x,p) \in \mathbb{R}^n \times \mathbb{R}^t$ satisfying $x \in B (x^\ast,\epsilon)$ and $F(x) + p \in \Delta$, one has
%	\begin{equation*}
%		f(x) + \bar{\alpha} ||p|| \geq f(x^\ast)
%	\end{equation*}
%	Here, $\bar{\alpha}$ and $\epsilon$ are called the modulus and the radius of calmness for $\Theta(0)$ at $x^\ast$ respectively.
%\end{Definition}
 \hspace*{0.5 cm}The feasible set of this perturbed problem can be define by means of the multifunction
\begin{equation}
\label{perturbation}
	M(p) := \{x \in \mathbb{R}^n ~|~ F(x)+ p \in \Delta \}
\end{equation}
usually called perturbation map. It is easy to see that $ \mathcal{C}= F^{-1} (\Delta)= M(0).$\\
\hspace*{0.5 cm} The applicability of calculus of multifunctions in optimization problems emerged the following notion of calmness for multifunction, from\cite{Rocka}.
\begin{Definition}
Let $\Phi : \mathbb{R}^p \rightrightarrows \mathbb{R}^q$ be a multifunction with a closed graph and $(u, v) \in gph \Phi$. Then we say that $\Phi$ is calm at $(u, v)$ if there exist neighbourhoods $U$ of $u$, $V$ of $v$ and a modulus $L \geq 0$ such that
\begin{equation}
\Phi (u') \cap V ~\subseteq~ \Phi(u) + L ||u - u'|| \mathbb{B}~~~~~~~~\forall~u' \in U
\end{equation}
where $\mathbb{B}:=\mathbb{B}(0,1)$.
\end{Definition}
The significance of the calmness stems in the following result, see \cite[Corollary 1]{henrion} or \cite{Pang}.
 \begin{Proposition}
 \label{calm-leb}
 Let $x^\ast \in M(0)$ be a feasible point for (\ref{relcq1}). Then the following are equivalent
 \begin{enumerate}
 \item $M$ is calm at $(0, x^\ast)\in {\rm gph} M.$
 \item Local error bounds exist \emph{i.e.} there exist constants $\delta>0$ and $c>0$ such that
 \begin{equation*}
		{\rm dist}_{F^{-1} (\Delta)}(x) \leqslant c~ {\rm dist}_\Delta (F(x))
	\end{equation*}
holds for all $x \in \mathbb{B}(x^\ast, \delta)$.
 \end{enumerate}
 \end{Proposition}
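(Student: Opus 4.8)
The statement is the standard equivalence between calmness of the perturbation map and a local error bound, so the plan is to prove the two implications directly from the definitions. The one elementary observation that ties the two conditions together is that, since $\Delta$ is closed and the infimum is attained,
\[
{\rm dist}_\Delta(F(x)) ~=~ \min\{\|p\| ~:~ F(x) + p \in \Delta\} ~=~ \min\{\|p\| ~:~ x \in M(p)\}.
\]
In words, the constraint defect ${\rm dist}_\Delta(F(x))$ equals the norm of the smallest perturbation $p$ that makes $x$ feasible for $M(p)$; this reformulation is the bridge I would use in both directions.

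For the implication $(1) \Rightarrow (2)$, assume $M$ is calm at $(0, x^\ast)$ with neighbourhoods $U \ni 0$, $V \ni x^\ast$ and modulus $L$. Given $x$ near $x^\ast$, pick $p = p(x)$ attaining $\|p(x)\| = {\rm dist}_\Delta(F(x))$, so that $x \in M(p(x))$. Because $F$ is continuous and $F(x^\ast) \in \Delta$, the defect ${\rm dist}_\Delta(F(x))$ tends to $0$ as $x \to x^\ast$, hence there is $\delta > 0$ with $\mathbb{B}(x^\ast, \delta) \subseteq V$ and $p(x) \in U$ for every $x \in \mathbb{B}(x^\ast, \delta)$. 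Applying calmness to such an $x$ gives $x \in M(0) + L\|p(x)\|\mathbb{B} = \mathcal{C} + L\,{\rm dist}_\Delta(F(x))\,\mathbb{B}$, that is, ${\rm dist}_\mathcal{C}(x) \leq L\,{\rm dist}_\Delta(F(x))$, which is the desired error bound with $c = L$.

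For the implication $(2) \Rightarrow (1)$, assume the error bound ${\rm dist}_{F^{-1}(\Delta)}(x) \leq c\,{\rm dist}_\Delta(F(x))$ holds on $\mathbb{B}(x^\ast, \delta)$. Take $V := \mathbb{B}(x^\ast, \delta)$ and let $U$ be any neighbourhood of $0$. For $p \in U$ and $x \in M(p) \cap V$ we have $F(x) + p \in \Delta$, hence ${\rm dist}_\Delta(F(x)) \leq \|p\|$, and the error bound gives ${\rm dist}_\mathcal{C}(x) \leq c\|p\|$. Since $\mathcal{C} = M(0)$ is nonempty and closed, the projection of $x$ onto $\mathcal{C}$ realizes this distance, so $x \in M(0) + c\|p\|\mathbb{B}$; this is precisely calmness of $M$ at $(0, x^\ast)$ with modulus $c$.

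The only items needing a line of justification are that ${\rm gph}\,M$ is closed — immediate from continuity of $F$ and closedness of $\Delta$, so that the notion of calmness is meaningful — and that the distances to $\Delta$ and to $\mathcal{C}$ are attained. The genuine (though mild) obstacle sits in $(1)\Rightarrow(2)$: one must shrink $\delta$ so that the minimal perturbation $p(x)$ actually lands in the neighbourhood $U$ on which calmness is available, and this is exactly where continuity of $F$ and feasibility of $x^\ast$ enter.
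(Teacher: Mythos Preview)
Your proof is correct. The paper itself does not prove this proposition: it simply states the result with the attribution ``see \cite[Corollary 1]{henrion} or \cite{Pang}'' and moves on. So there is no in-paper argument to compare against; you have supplied a self-contained proof where the authors chose to cite the literature.

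Your approach is the standard one: the identity ${\rm dist}_\Delta(F(x)) = \min\{\|p\| : x \in M(p)\}$ is exactly the bridge between the two formulations, and both directions follow from it in the way you indicate. The only place to be a touch more careful is a cosmetic one: in $(2)\Rightarrow(1)$ you conclude $x \in M(0) + c\|p\|\mathbb{B}$ from ${\rm dist}_{M(0)}(x) \le c\|p\|$ via the projection, but the paper defines $\mathbb{B}(x,\delta)$ as the \emph{open} ball, so at the boundary case ${\rm dist}_{M(0)}(x) = c\|p\|$ the inclusion needs the closed ball. This is a harmless normalisation issue (one either works with closed balls in the calmness definition or allows an arbitrarily small increase in the modulus), not a gap in the argument.
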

Now, we recall the GMFCQ from \cite[Definition 3.7]{hoheisel} and we show that in MPVC-setup, this definition is actually equivalent to MPVC-GMFCQ given in section \ref{Pre}.
\begin{Definition}
	\label{gmfcq 2}
Let $x^\ast$ be feasible for (\ref{mpvc vector}), then the generalized Mangasarian-Fromovitz constraint qualification (GMFCQ) holds at $x^\ast$ if the following holds
\begin{equation}
\label{gmfcq}	\begin{rcases}
		F'&(x^\ast)^T \lambda =~ 0 \\
		\lambda \in & N_\Delta (F(x^\ast))
	\end{rcases}
	\text{$\Rightarrow \lambda = 0 $}
	\end{equation}
\end{Definition}
Now, we show that the two definitions are equivalent. For this, we need the limiting normal cones of some relevant sets \cite[Lemma 3.2]{hoheisel 2}.
\begin{equation*}
N_\Omega (a,b) ~=~ \left\{ \left( \begin{array}{cc}
 \xi\\ \zeta
\end{array}\right) : \begin{array}{ccc}
\xi = ~0~ =\zeta &;& if ~ a >0,~b <0\\
\xi =0 ,~ \zeta \geqslant 0 &;& if ~a > 0, ~ b=0 \\
\zeta \geqslant 0, \xi  \cdot \zeta  = 0 &;&if ~a ~=~0~= ~b \\
\xi \leqslant 0 ,~ \zeta = 0 &;& if ~a = 0, ~b <0\\
\xi \in \mathbb{R} ,~ \zeta = 0 &;& if ~a = 0, ~b > 0
\end{array}
\right \}
\end{equation*}
\begin{equation*}
	N_{(-\infty, 0]} (a) ~=~ \left \{ \begin{array}{ccc}
	\{0\} &;& a < 0 \\
   \ [0, \infty) &;& a=0 \\
	\phi &;& a > 0
	\end{array} \right \}
\end{equation*}
\begin{equation*}
N_{\{0\}} (0) ~=~ \mathbb{R}
\end{equation*}
With the structure of above cones, we can establish the equivalence between Definitions \ref{gmfcq 1} and \ref{gmfcq 2}, as follows :
\begin{Lemma}
\label{gmfcq1-2}
Definition \ref{gmfcq 2} is equivalent to MPVC-GMFCQ.
\end{Lemma}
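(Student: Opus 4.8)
The plan is to unwind Definition~\ref{gmfcq 2} componentwise, using the block structure of $F$, $\Delta$, and the limiting normal cone $N_\Delta$, and then match the resulting conditions term-by-term with conditions (i) and (ii) of MPVC-GMFCQ (Definition~\ref{gmfcq 1}). First I would observe that, because $\Delta$ is a Cartesian product $(-\infty,0]^m \times \{0\}^l \times \Omega^q$, the limiting normal cone factors as a product, $N_\Delta(F(x^\ast)) = \prod_{i=1}^m N_{(-\infty,0]}(g_i(x^\ast)) \times \prod_{j=1}^l N_{\{0\}}(h_j(x^\ast)) \times \prod_{i=1}^q N_\Omega(G_i(x^\ast),H_i(x^\ast))$. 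So a multiplier $\lambda \in N_\Delta(F(x^\ast))$ is really a tuple $(\lambda_i)_{i=1}^m$, $(\mu_j)_{j=1}^l$, $(\eta_i^G,\eta_i^H)_{i=1}^q$ — here I would match signs so that the $H$-block multiplier is $-\eta_i^H$, consistent with the sign convention in Definition~\ref{gmfcq 1} — and the transposed-Jacobian equation $F'(x^\ast)^T\lambda = 0$ becomes exactly equation (i) of Definition~\ref{gmfcq 1}.

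Next I would read off the membership constraints $\lambda \in N_\Delta(F(x^\ast))$ from the three explicit normal-cone formulas quoted just above the lemma, splitting into cases according to which index set $i$ belongs to. For the $g$-block: $N_{(-\infty,0]}(g_i(x^\ast))$ is $[0,\infty)$ when $i\in I_g$ and $\{0\}$ otherwise (the case $g_i(x^\ast)>0$ cannot occur at a feasible point), giving $\lambda_i \ge 0$ for $i\in I_g$ and $\lambda_i=0$ for $i\notin I_g$. For the $h$-block: $N_{\{0\}}(0)=\mathbb{R}$, so each $\mu_j$ is free. For the $\Omega$-block: I would run through the five cases of $N_\Omega(a,b)$ against the definitions of $I_{+-}, I_{+0}, I_{00}, I_{0-}, I_{0+}$ — e.g. $i\in I_{+-}$ corresponds to $a>0,b<0$ hence $\eta_i^G=\eta_i^H=0$; $i\in I_{+0}$ to $a>0,b=0$ hence $\eta_i^G\ge 0$ and $\eta_i^H=0$; $i\in I_{00}$ to $a=b=0$ hence $\eta_i^G\ge 0$ (after accounting for the sign of $\zeta$ and the $-\eta_i^H$ convention), $\eta_i^G\cdot\eta_i^H=0$; $i\in I_{0-}$ to $a=0,b<0$ hence $\eta_i^G\le 0$… — and check these reproduce precisely the list in condition (ii) of Definition~\ref{gmfcq 1}. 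Once every case is verified, "$\lambda\in N_\Delta(F(x^\ast))$ and $F'(x^\ast)^T\lambda=0$ forces $\lambda=0$" is literally the same assertion as "no nonzero $(\lambda,\mu,\eta^H,\eta^G)$ satisfies (i) and (ii)," so the two definitions coincide; since the equivalence is between two implications, I would note it holds in both directions automatically.

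The main obstacle — really the only delicate point — is bookkeeping with signs: the abstract multiplier $\lambda$ in Definition~\ref{gmfcq 2} pairs with $F$, whereas Definition~\ref{gmfcq 1} writes the $H$-terms with an explicit minus sign and uses sign conventions on $\eta^G,\eta^H$ that must be reconciled with the two coordinates $(\xi,\zeta)$ of $N_\Omega$. I would fix the identification $\xi \leftrightarrow \eta_i^G$, $\zeta \leftrightarrow -\eta_i^H$ once at the outset and then check each of the five $\Omega$-cases carefully; in particular the biactive case $a=b=0$, where $N_\Omega$ gives $\zeta\ge 0$ and $\xi\cdot\zeta=0$, must be seen to translate to $\eta_i^G\ge 0$ (note: a sign reconciliation is needed here with the "$\eta_i^G\ge0$ for $i\in I_{00}$" in Definition~\ref{gmfcq 1}), $\eta_i^H \le 0$ in the relevant coordinate, and the complementarity $\eta_i^G\eta_i^H=0$. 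Everything else is a routine case check against the quoted normal-cone formulas, so the lemma follows.
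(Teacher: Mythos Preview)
Your approach is exactly the paper's: factor $N_\Delta(F(x^\ast))$ as a product of componentwise normal cones (the paper cites \cite[Proposition~6.41]{Rocka}) and match the resulting membership conditions against (i)--(ii) of Definition~\ref{gmfcq 1}; the paper records the $\Omega$-block as $(\eta_i^G,-\eta_i^H)\in -N_\Omega(G_i(x^\ast),H_i(x^\ast))$ and leaves the case-check implicit. One bookkeeping fix to your sketch: the $(a,b)$ labels in the displayed $N_\Omega$ table correspond to $(H_i,G_i)$, not $(G_i,H_i)$ (note the cases with $b<0$, which cannot occur for $H_i$ at a feasible point), so the correct pairing is $\xi\leftrightarrow -\eta_i^H$, $\zeta\leftrightarrow \eta_i^G$; with your stated identification the $I_{0-}$ case already gives $\eta_i^G\le 0$ instead of the required $\eta_i^G=0$, $\eta_i^H\ge 0$.
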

\begin{proof}
Firstly, we may write the limiting normal cone $N_\Delta (F(x^\ast))$ according to \cite[Proposition 6.41]{Rocka} as
\begin{equation*}
N_\Delta (F(x^\ast)) = \prod_{i=1}^{m} N_{(-\infty, 0]} (g_i(x^\ast)) \times \prod_{j=1}^{l} N_{\{0\}} (h_j(x^\ast)) \times \prod_{i=1}^{q} N_\Omega (G_i(x^\ast, H_i(x^\ast))
\end{equation*}
Hence, condition (\ref{gmfcq}) in Definition \ref{gmfcq 2} is equivalent to \\
\begin{equation*}
	 \sum_{i=1}^{m} \lambda_i \nabla g_i(x^\ast) + \sum_{j=1}^{l} \mu_j \nabla h_j (x^\ast) + \sum_{i=1}^{q} \eta^G_i \nabla G_i(x^\ast)
	-\sum_{i=1}^{q} \eta^H_i \nabla H_i(x^\ast)~=~0
\end{equation*}	
where\\
$	~~~~~~~~~\lambda_i \in N_{(-\infty, 0]} (g_i(x^\ast)) ~~~\forall~i = 1,...,m $\\
$~~~~~~~~~~ \mu_j \in N_{\{0\}}(h_j(x^\ast)) ~~~\forall ~j = 1,...,l$\\
$~~~~~~~~~~ (\eta_i^G, -\eta_i^H) \in -N_\Omega(G_i(x^\ast), H_i(x^\ast)) ~~~\forall~i=1,...,q$ \\
\begin{equation*}
\Longrightarrow (\lambda, \mu, \eta^G, \eta^H) ~=~0
\end{equation*}
which is the MPVC-GMFCQ.
\end{proof}

In \cite[Proposition 3.8]{hoheisel}, it has been given that MPVC-GMFCQ equivalently condition (\ref{gmfcq}) guarantees the calmness of $M$ at $(0, x^\ast) \in  gph M$ for any feasible point $x^\ast \in M(0)$ of  MPVC (\ref{mpvc vector}), and thus exactness of penalty function (\ref{penalty}) follows, see \cite[Corollary 3.9]{hoheisel}. Hence, Lemma \ref{gmfcq1-2} immediately improves the result \cite[Theorem  4.5]{hoheisel}.\\
\hspace*{0.5 cm} Now in order to derive the said relation, we need the tangent cone of set $\Delta$, which is hard to compute directly. Fortunately, we have the following result, which reduces the difficulty of such computation and will be used to derive the main Theorem of this section.
\begin{Lemma}
\label{tangent prod}
Let $x^\ast$ be feasible for MPVC, then the tangent cone is given by
\begin{equation*}
T_\Delta (F(x^\ast)) = \prod_{i=1}^{m} T_{(-\infty, 0]} (g_i(x^\ast)) \times \prod_{j=1}^{l} T_{\{0\}} (h_j(x^\ast)) \times \prod_{i=1}^{q} T_\Omega (G_i(x^\ast), H_i(x^\ast))
\end{equation*}
\end{Lemma}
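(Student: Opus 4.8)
The plan is to reduce the computation of $T_\Delta(F(x^\ast))$ — which is awkward because $\Delta$ lives in $\mathbb{R}^{m+l+2q}$ and contains the nonconvex factors $\Omega$ — to the tangent cones of the one- and two-dimensional factor sets, via the product rule for contingent cones. One inclusion is automatic: for any Cartesian product $A_1\times\cdots\times A_r$ and any $\bar a=(\bar a_1,\dots,\bar a_r)$ in it, if $d\in T_{A_1\times\cdots\times A_r}(\bar a)$ with witnessing sequences $d^k\to d$, $t_k\downarrow 0$ and $\bar a+t_k d^k$ in the product, then in each block $\bar a_i+t_k d^k_i\in A_i$ and $d^k_i\to d_i$, so $d_i\in T_{A_i}(\bar a_i)$; hence $T_\Delta(F(x^\ast))\subseteq\prod_{i}T_{(-\infty,0]}(g_i(x^\ast))\times\prod_{j}T_{\{0\}}(h_j(x^\ast))\times\prod_{i}T_\Omega(G_i(x^\ast),H_i(x^\ast))$. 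For the reverse inclusion one must produce a \emph{single} sequence $t_k\downarrow 0$ that works simultaneously in all blocks, and this is exactly what geometric derivability of the factors provides (\cite[Prop.~6.41]{Rocka}): if each factor $S$ is geometrically derivable at its base point $\bar s$ — i.e. for every $d\in T_S(\bar s)$ and every $t_k\downarrow 0$ there are $d^k\to d$ with $\bar s+t_k d^k\in S$ — then, given a tuple in the product of the tangent cones, one fixes any $t_k\downarrow 0$, applies derivability in each block, and concatenates the resulting sequences into $d^k\to d$ with $F(x^\ast)+t_k d^k\in\Delta$, so $d\in T_\Delta(F(x^\ast))$.

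Thus the real work is to check that every factor of $\Delta$ is geometrically derivable at the relevant base point. The factors $(-\infty,0]$ and $\{0\}$ are convex, hence geometrically derivable everywhere. For the factor $\Omega$ I would argue by cases according to which index set $i$ belongs to, i.e. where the pair $(G_i(x^\ast),H_i(x^\ast))$ lies. For $i\in I_{+-}\cup I_{+0}\cup I_{0-}\cup I_{0+}$ one checks directly that, in a neighbourhood of $(G_i(x^\ast),H_i(x^\ast))$, the set $\Omega$ coincides with all of $\mathbb{R}^2$, a halfplane ($\{a\le 0\}$ resp.\ $\{b\ge 0\}$), or the line $\{b=0\}$; in each of these cases $\Omega$ is locally convex and therefore geometrically derivable. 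The only delicate case is $i\in I_{00}$, where $(G_i(x^\ast),H_i(x^\ast))=(0,0)$ and $\Omega$ is nonconvex: here one uses that $\Omega$ is a closed cone, so $T_\Omega(0,0)=\Omega$, and that for every $d\in\Omega$ the entire ray $\{td:t\ge 0\}$ stays in $\Omega$ (if $d$ lies in the second quadrant so does $td$; if $d$ lies on the first-coordinate axis so does $td$). Taking $d^k:=d$ for all $k$ then verifies derivability at the biactive point. Alternatively one may simply invoke the explicit descriptions of the cones in \cite[Lemma~3.2]{hoheisel 2} and read off the same ray property.

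With geometric derivability of all factors in hand, the product rule yields
\[
T_\Delta(F(x^\ast))=\prod_{i=1}^{m}T_{(-\infty,0]}(g_i(x^\ast))\times\prod_{j=1}^{l}T_{\{0\}}(h_j(x^\ast))\times\prod_{i=1}^{q}T_\Omega(G_i(x^\ast),H_i(x^\ast)),
\]
which is the assertion. I expect the only genuine obstacle to be the nonconvex factor $\Omega$ at a biactive index $i\in I_{00}$: one must make sure the ``common $t_k$'' argument still goes through there, and the cone/ray observation is precisely what rescues it — everywhere else the claim follows routinely from local convexity together with the elementary product inclusion.
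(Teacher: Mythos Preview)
Your argument is correct and takes a genuinely different, more conceptual route than the paper. The paper establishes the inclusion $\supseteq$ by a direct construction: it picks, for each factor, witnessing sequences $(d^k,t^k)$, defines a common step size $t^k:=\min\{t^k_{g_i},t^k_{h_j},t^k_{G_iH_i}\}$, and then verifies by an exhaustive case analysis over all index sets $I_{+0},I_{+-},I_{0+},I_{0-},I_{00}$ (and subcases on the sign of $d^k_{H_i}$, $d^k_{G_i}$) that $F(x^\ast)+t^k d^k\in\Delta$ for $k$ large. You instead isolate the abstract reason the product rule holds with equality---geometric derivability of each factor---and then dispatch the factors quickly: the convex blocks $(-\infty,0]$ and $\{0\}$ are automatic, the four ``non-biactive'' positions of $\Omega$ are handled by local convexity (all of $\mathbb{R}^2$, a halfplane, or the line $\{b=0\}$), and the biactive case $i\in I_{00}$ is settled by the cone/ray observation $T_\Omega(0,0)=\Omega$ with $d^k\equiv d$. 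What you gain is brevity and a clean separation of concerns: once derivability is checked, \cite[Prop.~6.41]{Rocka} delivers both inclusions at once and no ``$\min$-trick'' with its attendant sign discussions is needed. The paper's approach, by contrast, is entirely self-contained and computational, never invoking the notion of geometric derivability; it also makes explicit, step by step, why the nonconvex corner of $\Omega$ causes no trouble, which your ray argument compresses into one line.
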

\begin{proof}
	Here we need to show only ''$\supseteq$'' inclusion, another "$\subseteq$" follows from \cite[Proposition 6.41]{Rocka}. Choose arbitrary elements $d_{g_i} \in T_{(-\infty, 0]} (g_i(x^\ast)), ~ d_{h_j} \in T_{\{0\}} (h_j(x^\ast)) $ and $(d_{G_i}, d_{H_i}) \in T_\Omega (G_i(x^\ast), H_i(x^\ast))$, and define
	\begin{equation*}
		d := (d_{g_i,~i=1,...,m}, ~d_{h_j, ~j=1,...,l}, ~(d_{G_i}, d_{H_i})_{i=1,...,q})
	\end{equation*}
	Following the definition of a tangent vector, there exist sequences
	\begin{eqnarray}
\nonumber	d^k_{g_i} \rightarrow d_{g_i}, ~t^k_{g_i} \downarrow 0 &{\rm with}& g_i(x^\ast) + t^k_{g_i} d^k_{g_i} \leq 0 \\
\nonumber	d^k_{h_j} \rightarrow d_{h_j}, ~t^k_{h_j} \downarrow 0 &{\rm with}& h_j(x^\ast) + t^k_{h_j} d^k_{h_j} = 0 \\
\label{vc 1}
	(d^k_{G_i}, d^k_{H_i}) \rightarrow (d_{G_i}, d_{H_i}), ~t^k_{G_i H_i} \downarrow 0 &{\rm with}& (H_i(x^\ast) + t^k_{G_i H_i} d^k_{H_i}) \geq 0 \\
	\label{vc 2}
	&{\rm and}& (G_i(x^\ast) + t^k_{GH_i} d^k_{G_i}) (H_i(x^\ast) + t^k_{GH_i} d^k_{H_i}) \leq 0
	\end{eqnarray}
	$\forall ~k \in \mathbb{N}$. Consequently, we have
	\begin{equation*}
	d^k := \left (d^k_{g_i,~i=1,...,m}, ~d^k_{h_j, ~j=1,...,l}, ~(d^k_{G_i}, d^k_{H_i})_{i=1,...,q} \right ) ~\rightarrow~ d
	\end{equation*}
	Now to prove the required result we have to show that $d \in T_\Delta (F(x^\ast))$, that is we have to find a sequence $t^k \downarrow 0$ such that $F(x^\ast) + t^k d^k \in \Delta,~\forall ~k \in \mathbb{N}$. \\
	Define
	\begin{equation*}
		t^k := \min \{t^k_{g_i, i=1,...,m}, t^k_{h_j, j=1,...,l}, t^k_{G_i H_i, 1,...,q}\}
	\end{equation*}
	$\forall ~k \in \mathbb{N}$. Clearly $t^k \downarrow 0$, and it remains to show $F(x^\ast) + t^k d^k \in \Delta ~~\forall~ k \in \mathbb{N}$. Now choose $k \in \mathbb{N}$ arbitrarily but fixed, and recall that $x^\ast$ is feasible for MPVC. Then for every $i =1,...,m$, two cases can arise, either $d^k_{g_i} < 0$ or $d^k_{g_i} \geq 0$.\\
	 If $d^k_{g_i} < 0$, then we have
	 \begin{equation*}
	 g_i(x^\ast) + t^k d^k_{g_i} < g_i(x^\ast) \leq 0
	 \end{equation*}
	 and if $d^k_{g_i} \geq 0$, then
	 \begin{equation*}
	 g_i(x^\ast) + t^k d^k_{g_i}  \leq g_i(x^\ast) + t^k_{g_i} d^k_{g_i} \leq 0
	 \end{equation*}
	 Since $h_j(x^\ast) = 0$ and $t^k_{h_j} > 0, ~~\forall~j = 1,...,l$, therefore $d^k_{h_j} = 0$. Consequently, we have
	 \begin{equation*}
	 h_j(x^\ast) + t^k d^k_{h_j} = 0
	 \end{equation*}
	\textbf{Case (I) :} Consider $H_i(x^\ast) > 0$, then either $G_i(x^\ast) = 0$ or $G_i(x^\ast) < 0$ \\
	 \hspace*{0.5 cm} If $G_i(x^\ast) = 0$, that is $i \in I_{+0}$ then because of $d^k_{H_i} \rightarrow d_{H_i}$ and $t^k_{G_i H_i} \downarrow 0$, we have by eq. (\ref{vc 1})
	 \begin{equation}
	 \label{eq1}
	 H_i(x^\ast) + t^k_{G_i H_i} d^k_{H_i} > 0 ~~~~~;~~~~~~~~~~~~~~~~\forall~k \in \mathbb{N} ~~{\rm sufficiently ~large}
	 \end{equation}
	Then $H_i(x^\ast) + t^k d^k_{H_i} > 0$ also holds for sufficiently large $k \in \mathbb{N}$. Again (\ref{eq1}) yields with (\ref{vc 2})
	\begin{equation*}
	G_i(x^\ast) + t^k_{G_i H_i} d^k_{G_i} \leq 0 ~~~~~~;~~~~~~~~~~\forall~k \in \mathbb{N}
	\end{equation*}
	and hence $d^k_{G_i} \leq 0$. This implies
	\begin{eqnarray*}
			G_i(x^\ast) + t^k d^k_{G_i} &\leq &0 ~~~~~~;~~~~~~~~~~\forall~k \in \mathbb{N}~{\rm sufficiently ~large}\\
\Rightarrow ~	\left( H_i(x^\ast) + t^k d^k_{H_i} \right) \left( G_i(x^\ast) + t^k d^k_{G_i} \right)  &\leq& 0
	\end{eqnarray*}
	that is $F(x^\ast) + t^k d^k \in \Delta$ for all $k \in \mathbb{N}$.\\
	\hspace*{0.5 cm} If $G_i(x^\ast) < 0$, that is $i \in I_{+-}$ then $H_i(x^\ast) + t^k d^k_{H_i} > 0$ for all $k$ sufficiently large similarly as above, and also
	\begin{equation*}
	H_i(x^\ast) + t^k_{G_i H_i} d^k_{H_i} > 0
	\end{equation*}
	gives
	\begin{equation*}
		(G_i(x^\ast) + t^k_{G_i H_i} d^k_{H_i}) \leq 0 ~~~~~~~~~~~~~~~~~~{\rm by~eq~(\ref{vc 2})}
	\end{equation*}
	hence
	\begin{equation*}
	    ~~~~~~~~~~~~~~~~~~~~~(G_i(x^\ast) + t^k d^k_{H_i}) \leq 0 ~~~~~~~;~~~~~~~~~~~~{\rm for~all ~sufficiently ~large ~k}	
	\end{equation*}
	It again provides
	\begin{equation*}
	\left( H_i(x^\ast) + t^k d^k_{H_i} \right) \left( G_i(x^\ast) + t^k d^k_{G_i} \right)  \leq 0~~~~~~;~~~~~~~~~~~~~{\rm for~all ~sufficiently ~large ~k}
	\end{equation*}
	for all $i \in I_{+-}$, that is $F(x^\ast) + t^k d^k \in \Delta$ for all $k \in \mathbb{N}$ sufficiently large.\\
\textbf{Case (II) :} Now we consider $H_i(x^\ast) = 0$, then $d^k_{H_i} \geq 0$ and hence $H_i(x^\ast) + t^k d^k_{H_i} \geq 0$  for all $k \in \mathbb{N}$ and now we consider possibilities of $G_i(x^\ast)$ for both cases of $d^k_{H_i}$. \\
 (i) Suppose $d^k_{H_i} > 0$ firstly, then we have \\
 \begin{equation*}
 	G_i(x^\ast) + t^k_{G_i H_i} d^k_{G_i} \leq 0 ~~~~~~~~~~~;~~~~~~~~~~~~~\forall~i \in I_{0+} \cup I_{0-} \cup I_{00}
 \end{equation*}
 this gives
 \begin{equation*}
 ~~~~~~~~~~~~~~~~~~~~~~~~~~~~~~~~~~~~G_i(x^\ast) + t^k d^k_{G_i} \leq 0 ~~~~~~~~; ~~~~~~~~~~~~~{\rm for ~ sufficiently ~large } ~k \in \mathbb{N}
 \end{equation*}
 and hence
 \begin{equation*}
 \left( H_i(x^\ast) + t^k d^k_{H_i} \right) \left( G_i(x^\ast) + t^k d^k_{G_i} \right)  \leq 0
 \end{equation*}
 for all $i \in I_{0+} \cup I_{0-} \cup I_{00}$ and result holds.\\
 (ii) Now suppose $d^k_{H_i} = 0$ then $H_i(x^\ast) + t^k d^k_{H_i} = 0$ and hence
 \begin{equation*}
 \left( H_i(x^\ast) + t^k d^k_{H_i} \right) \left( G_i(x^\ast) + t^k d^k_{G_i} \right)  = 0
 \end{equation*}
 for all $i \in I_{0+} \cup I_{0-} \cup I_{00}$ and it obviously produce result as $F(x^\ast) + t^k d^k \in \Delta$ for all $k \in \mathbb{N}$ sufficiently large.
\end{proof}
\hspace*{0.5 cm} Here is the main result of this section, which states that MPVC-ACQ is weaker than MPVC-generalized-quasinormality.
\begin{Theorem}
	\label{relation}
Let $x^\ast$ be feasible for MPVC such that MPVC-generalized-quasinormality holds at $x^\ast$. Then MPVC-ACQ also holds at $x^\ast$.
\end{Theorem}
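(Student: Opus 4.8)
The plan is to prove the nontrivial inclusion $L_{MPVC}(x^\ast) \subseteq T_\mathcal{C}(x^\ast)$, since the reverse inclusion $T_\mathcal{C}(x^\ast) \subseteq L_{MPVC}(x^\ast)$ holds at every feasible point and is obtained in the standard way by passing to the limit along a tangent sequence (see \cite[Lemma 3.2.1]{hoheisel 1}). The driving force will be the local error bound of Lemma \ref{error bound}: because $x^\ast$ is MPVC-generalized quasinormal, there are $\delta, c > 0$ with $\mathrm{dist}_\mathcal{C}(x) \le c\,\mathrm{dist}_\Delta(F(x))$ for all $x \in \mathbb{B}(x^\ast, \delta)$; equivalently, by Proposition \ref{calm-leb}, the perturbation map $M$ is calm at $(0, x^\ast)$.

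First I would recast $L_{MPVC}(x^\ast)$ as the pre-linearized cone $\{d \in \mathbb{R}^n : F'(x^\ast) d \in T_\Delta(F(x^\ast))\}$. By Lemma \ref{tangent prod}, $T_\Delta(F(x^\ast))$ is the product $\prod_i T_{(-\infty,0]}(g_i(x^\ast)) \times \prod_j T_{\{0\}}(h_j(x^\ast)) \times \prod_i T_\Omega(G_i(x^\ast),H_i(x^\ast))$. Computing the factors --- $T_{(-\infty,0]}(0)=(-\infty,0]$ and $T_{(-\infty,0]}(s)=\mathbb{R}$ for $s<0$; $T_{\{0\}}(0)=\{0\}$; and the tangent cones of $\Omega$ at the points $(0,b)$ with $b>0$, $(a,0)$ with $a\ne 0$, $(a,b)$ with $a<0,\,b>0$, together with $T_\Omega(0,0)=\Omega$ --- and inserting them into the product, one recovers precisely the inequalities defining $L_{MPVC}(x^\ast)$ in Definition \ref{acq}. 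The clause associated with $I_{00}$ corresponds to $T_\Omega(0,0)=\Omega=\{(a,b):a\le 0,\,b\ge 0\}\cup\{(a,0):a\ge 0\}$, that is, $\nabla H_i(x^\ast)^Td\ge 0$ together with $(\nabla G_i(x^\ast)^Td)(\nabla H_i(x^\ast)^Td)\le 0$.

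The core step is then as follows. Let $d \in L_{MPVC}(x^\ast)$ and put $v := F'(x^\ast)d \in T_\Delta(F(x^\ast))$. The structural observation that makes everything work is that $\Delta$ is a \emph{finite union of convex polyhedra}: each copy of $\Omega$ equals $\{a\le 0,\,b\ge 0\}\cup\{b=0,\,a\ge 0\}$, hence $\Delta=\bigcup_k P_k$ for finitely many polyhedra $P_k$. For such a set $T_\Delta(y)=\bigcup_{k:\,y\in P_k}T_{P_k}(y)$, and if $P$ is a polyhedron with $y\in P$ then $v\in T_P(y)$ implies $y+tv\in P$ for all sufficiently small $t>0$. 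Consequently $F(x^\ast)+tv\in\Delta$, i.e.\ $\mathrm{dist}_\Delta(F(x^\ast)+tv)=0$, for all small $t>0$. Since $F$ is differentiable and $\mathrm{dist}_\Delta$ is $1$-Lipschitz,
\begin{equation*}
\mathrm{dist}_\Delta\big(F(x^\ast+td)\big)\;\le\;\big\|F(x^\ast+td)-\big(F(x^\ast)+tv\big)\big\|_1\;=\;o(t)\qquad(t\downarrow 0),
\end{equation*}
so by the error bound $\mathrm{dist}_\mathcal{C}(x^\ast+td)\le c\,\mathrm{dist}_\Delta(F(x^\ast+td))=o(t)$. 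Picking $x^t\in\Pi_\mathcal{C}(x^\ast+td)$ and letting $t=t_\nu\downarrow 0$, the difference quotients $(x^{t_\nu}-x^\ast)/t_\nu=d+o(1)$ converge to $d$, whence $d\in T_\mathcal{C}(x^\ast)$. This gives $L_{MPVC}(x^\ast)\subseteq T_\mathcal{C}(x^\ast)$, hence equality, i.e.\ MPVC-ACQ holds at $x^\ast$.

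The step I expect to be the main obstacle is the biactive set $I_{00}$. There $\Omega$ is genuinely nonconvex, $T_\Omega(0,0)$ equals $\Omega$ itself, and one must check carefully both that the linearized description of $L_{MPVC}(x^\ast)$ at these indices really coincides with the membership $(\nabla G_i(x^\ast)^Td,\nabla H_i(x^\ast)^Td)\in T_\Omega(0,0)$ --- in particular that the complementarity-type requirement is included --- and that, despite the nonconvexity, $\Omega$ remains a finite union of polyhedral cones, so that the estimate ``the straight ray $x^\ast+td$ leaves $\Delta$ only by an $o(t)$ amount'' survives. For all other index sets ($I_g$, the equality constraints, and $I_{+-},I_{+0},I_{0+},I_{0-}$) the relevant factor of $\Delta$ is locally a single convex polyhedron and the reduction is routine.
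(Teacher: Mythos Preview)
Your overall strategy is the paper's: use Lemma~\ref{error bound} to obtain the local error bound (equivalently, calmness of $M$ at $(0,x^\ast)$ by Proposition~\ref{calm-leb}), compute $T_\Delta(F(x^\ast))$ as the product in Lemma~\ref{tangent prod}, and identify the resulting prelinearized cone $L_\mathcal{C}(x^\ast)=\{d:F'(x^\ast)d\in T_\Delta(F(x^\ast))\}$ with $L_{MPVC}(x^\ast)$. The one methodological difference is that the paper obtains $T_\mathcal{C}(x^\ast)=L_\mathcal{C}(x^\ast)$ by invoking \cite[Proposition~1]{henrion}, whereas you give a direct proof exploiting that $\Delta$ is a finite union of polyhedra together with the error bound and a projection argument; this is a correct, self-contained unpacking of that cited result in the present setting and is a welcome addition.

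Your flagged ``main obstacle'' at $I_{00}$ is genuine and deserves a sharper resolution than you give it. You correctly compute $T_\Omega(0,0)=\Omega$, which forces both $\nabla H_i(x^\ast)^Td\ge 0$ \emph{and} $(\nabla G_i(x^\ast)^Td)(\nabla H_i(x^\ast)^Td)\le 0$. But Definition~\ref{acq}, as literally written in the paper, imposes only $\nabla H_i(x^\ast)^Td\ge 0$ for $i\in I_{00}$, with no restriction on $\nabla G_i(x^\ast)^Td$. Taken at face value, your prelinearized cone is then \emph{strictly smaller} than $L_{MPVC}(x^\ast)$, so for $d\in L_{MPVC}(x^\ast)\setminus L_\mathcal{C}(x^\ast)$ the vector $v=F'(x^\ast)d$ need not lie in $T_\Delta(F(x^\ast))$, the ray $F(x^\ast)+tv$ misses $\Delta$ by an amount of order $t$ rather than $o(t)$, and your projection step collapses. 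Thus your argument --- and equally the paper's, which writes down the identical list of inequalities when expanding $L_\mathcal{C}(x^\ast)$ --- proves only $T_\mathcal{C}(x^\ast)=L_\mathcal{C}(x^\ast)\subseteq L_{MPVC}(x^\ast)$, not equality. The toy problem $q=1$, $G(x)=x_1$, $H(x)=x_2$, $x^\ast=0$ already exhibits $T_\mathcal{C}=\Omega\subsetneq\{d:d_2\ge 0\}$ while MPVC-LICQ holds. Hence either Definition~\ref{acq} is missing the complementarity clause for $I_{00}$ --- in which case $L_{MPVC}=L_\mathcal{C}$ and both proofs are complete --- or the statement itself needs qualification; you should make explicit which reading of $L_{MPVC}$ you adopt rather than asserting that the two descriptions coincide.
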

\begin{proof}The Lemma \ref{error bound}  shows that MPVC-generalized-quasinormality yields the existence of local error bounds and by Proposition \ref{calm-leb} this is equivalent to calmness of the perturbation map $M(p)$ at $(0, x^\ast)$. Since $F$ is continuously differentiable, hence locally Lipschitz, therefore from \cite[Proposition 1]{henrion}, we obtain
\begin{equation*}
	T_\mathcal{C}(x^\ast) ~=~L_\mathcal{C}(x^\ast)
\end{equation*}
where $L_\mathcal{C}(x^\ast)$ is the linearized cone of feasible region $\mathcal{C}$ at $x^\ast$ and is defined as
\begin{equation*}
L_\mathcal{C}(x^\ast) = \{d \in \mathbb{R}^n  ~|~ \nabla F(x^\ast)^T d \in T_\Delta (F(x^\ast)) \}
\end{equation*}
Since we have by Lemma \ref{tangent prod}
\begin{equation*}
T_\Delta (F(x^\ast)) = \prod_{i=1}^{m} T_{(-\infty, 0]} (g_i(x^\ast)) \times \prod_{j=1}^{l} T_{\{0\}} (h_j(x^\ast)) \times \prod_{i=1}^{q} T_\Omega (G_i(x^\ast, H_i(x^\ast)).
\end{equation*}
Therefore, $L_\mathcal{C}(x^\ast)$ can be written as
\begin{eqnarray*}
L_\mathcal{C}(x^\ast) &=& \{ d \in \mathbb{R}^n ~|~ \nabla g_i(x^\ast)^T d \in T_{(-\infty, 0]}(g_i(x^\ast)) ~~\forall~i = 1,...,m,\\
&&~~~~~~~~~~~~~~\nabla h_j(x^\ast)^T d \in T_{\{0\}}(h_j(x^\ast)) ~~~~~\forall~j = 1,...,l,\\
&&~~~~~~~~~~~~~~(\nabla G_i(x^\ast)^T d, \nabla H_i(x^\ast)^T d) \in T_\Omega(G_i(x^\ast), H_i(x^\ast))~~\forall~i = 1,...,q \} \\
&&\\
&=& \{ d \in \mathbb{R}^n ~|~ \nabla g_i(x^\ast)^T d \leq0 ~~~~\forall ~i \in I_g \\
&&~~~~~~~~~~~~~~ \nabla h_j(x^\ast)^T d = 0 ~~~~\forall ~j = 1,...,l \\
&&~~~~~~~~~~~~~~ \nabla H_i(x^\ast)^T d = 0 ~~~~\forall ~i \in I_{0+} \\
&&~~~~~~~~~~~~~~ \nabla H_i(x^\ast)^T d \geq 0 ~~~~\forall ~i \in I_{00} \cup I_{0-} \\
&&~~~~~~~~~~~~~~ \nabla G_i(x^\ast)^T d \leq 0 ~~~~\forall ~i \in I_{+0} \}\\
&&\\
&=& L_{MPVC}(x^\ast)
\end{eqnarray*}
Here $L_{MPVC}$ is the linearized cone of MPVC as defined in Definition \ref{acq}, and consequently we have  $T_\mathcal{C}(x^\ast) ~=~ L_\mathcal{C}(x^\ast) ~=~L_{MPVC}(x^\ast) $, that is MPVC-ACQ is satisfied at $x^\ast$.
\end{proof}
\begin{Remark}
MPVC-ACQ is strictly weaker than MPVC-generalized-quasinormality, we illustrate it as follows.
\end{Remark}
\begin{Example} We consider the MPVC
\begin{eqnarray*}
	\min f(x) = |x_1|+|x_2|\\
	g(x) = x_1 + x_2 &\leq& 0 \\
	H(x) = x_1 &\geq& 0 \\
	G(x) H(x) = x_1 (x_1^2 - x_2^2) &\leq& 0
\end{eqnarray*}
The point $x^\ast = (0,0)$ is feasible and all constraints are active at $x^\ast$. For this program MPVC-generalized-quasinormality and all stronger CQs fail to hold at $x^\ast$, but MPVC-ACQ holds because $T_\mathcal{C}(x^\ast)~=~\mathcal{C}~=~L_{MPVC}(x^\ast)$ for $\mathcal{C}$ being the feasible region for the program.
\end{Example}
\begin{Remark}
In the above example, it is easy to see that $P_\alpha(x)$ is exact at $x^\ast=(0,0)$ but MPVC-generalized-quasinormality is violated at $x^\ast$. Hence, in general,  converse of the Theorem \ref{thm-extpenl} is not true.
\end{Remark}
\hspace*{0.5 cm} Finally, we have shown that the following implications hold for a local minimum $x^\ast$ of MPVC given in (\ref{initprblm}).
\begin{eqnarray*}
 &MPVC-MFCQ &\\
&\Downarrow &\\
& MPVC-GMFCQ &\\
&\Downarrow &\\
&MPVC-generalized~pseudonormality &\\
&\Downarrow &\\
&MPVC-generalized~quasinormality & \\
&\Downarrow &\\
&  MPVC-ACQ \Longleftarrow Calmness~ of~ M(p)~ at ~(0, x^\ast) \Longrightarrow exactness~of~P_\alpha &
\end{eqnarray*}	
\section{Concluding Remarks}
\label{Conc}
We have used a local error bound result from \cite{khare} to establish an exact penalty result for MPVC- tailored penalty function  $P_\alpha$ under a very weak and new assumption, the MPVC-generalized quasinormality. This CQ turns out to be strictly stronger than MPVC-ACQ, and has been illustrated  by an example. We conclude this paper having a challenge of investigating reasonable weak conditions for exactness of classical $l_1$-penalty function for MPVC.

\end{document}